\newtheorem{theorem}{Theorem}[section]
\newtheorem{proposition}[theorem]{Proposition}
\newtheorem{corollary}[theorem]{Corollary}
\newtheorem{example}[theorem]{Example}
\newtheorem{rem}[theorem]{Remark}
\newtheorem{question}{Question}[section]
\def\cC{\mathcal C}
\def\cF{\mathcal F}
\begin{document}
\title{Plane curves possessing two outer Galois points}

\thanks{The first author was partially supported by JSPS KAKENHI Grant Numbers JP16K05088 and JP19K03438}
\thanks{The second author was partially supported by FAPESP-Brazil, grant 2017/18776-6} 

\keywords{Algebraic curves; Automorphism groups; Galois points}

\subjclass[2020]{14H37, 14H05}
\author{Satoru Fukasawa}
\author{Pietro Speziali}
\address{Department of Mathematical Sciences, Faculty of Science, Yamagata University, Yamagata 990-8560 (Japan)}
\email{s.fukasawa@sci.kj.yamagata-u.ac.jp}
\address{Instituto de  Matem\'atica, Estatstica e Computa\c{c}\~ao Cientfica, Universidade de Campinas, Campinas, SP 13083-859 (Brazil)}
\email{speziali@unicamp.br}

\begin{abstract} 
We classify plane curves $\cC$ possessing two Galois points $P_1$ and $P_2 \in \mathbb{P}^2 \setminus \cC$ such that the associated Galois groups $G_{P_1}$ and $G_{P_2}$ generate the semidirect product $G_{P_1}\rtimes G_{P_2}$. 
New examples of plane curves with two Galois points are also presented. 
\end{abstract}

\maketitle
\section{Introduction} 
A classical problem in Algebraic Geometry is the classification of algebraic varieties.
Birational invariants are classical tools for the classification of  projective (non-singular) algebraic curves $\cC$ up to birational or projective equivalence. 
In some cases, the genus $g$ and the automorphism group ${\rm Aut}(\cC)$ completely determine a curve; see \cite{AS, HKT}.   
More recently, the number of Galois points of $\cC$ has been successfully used for the classification problem (\cite{SF, SF2011, Yos, YF}). 

Let $K$ be an algebraically closed field of characteristic $p \ge 0$. 
A point $P \in \mathbb{P}^2$ is called a Galois point for a plane curve $\cC \subset \mathbb{P}^2$ with function field $K(\cC)$, if the finite field extension $K(\cC)/\pi_{P}^*K(\mathbb{P}^1)$ induced by the projection $\pi_P$ from $P$ is a Galois extension (\cite{MY, Yos}). 
The associated Galois group of $P$ is denoted by $G_{P}$. 
Furthermore, a Galois point $P$ is said to be outer, if $P \in \mathbb{P}^2\setminus \cC$.  

In this article, we classify plane curves $\cC$ possessing two outer Galois points $P_1$ and $P_2 \in \mathbb{P}^2 \setminus \cC$ such that associated groups $G_{P_1}$ and $G_{P_2}$ generate $G_{P_1} \rtimes G_{P_2}$.  Determining the structure of the group generated by $G_{P_1}$ and $G_{P_2}$ seems a rather challenging task. On the one hand, by \cite[Lemma 7]{SF2011} $G_{P_1}$ and $G_{P_2}$ have trivial intersection. On the other hand,  in general it is not possible to embed the group  $\langle G_{P_1}, G_{P_2}\rangle$ into ${\rm PGL}(3,K)$; see for instance \cite[Remark 1]{Yos} and \cite[Proposition 4]{M}. 
Further, it might be that $G = \langle G_{P_1}, G_{P_2}\rangle \ne G_{P_1} \rtimes G_{P_2}$ but $G$ is linear, as the following example shows. 
\begin{example}
For $p > 0$, let $q = p^h$ be a power of $p$. 
Also, let $\mathcal{H}$ be the Hermitian curve with affine equation $x^{q+1} = y^q+y$. 
By \cite[Theorems 1 and 2]{H}, any point in $\mathbb{P}^2(\mathbb{F}_{q^2}) \setminus \mathcal{H}$ is an outer Galois point for $\mathcal{H}$ with Galois group isomorphic to a cyclic group $C_{q+1}$.  
Take two points $P_1, P_2$ lying on the same tangent to $\mathcal{H}$ at a point $P$. 
Clearly $\langle G_{P_1},G_{P_2}\rangle$, being a subgroup of the stabilizer ${\rm Aut}(\mathcal{H})_P$ of P in ${\rm Aut}(\mathcal{H}) \cong {\rm PGU}(3,q)$, is linear. Assume that $\langle G_{P_1},G_{P_2}\rangle$ is a (semi)direct product.  
By \cite[Theorem A.10]{HKT}, $|{\rm Aut}(\mathcal{H})_P| =q^3(q^2-1)$, a contradiction. 
\end{example}
Our main result is the following Theorem. 

\begin{theorem}\label{result}
Let $\cC$ be a plane curve of degree $d \ge 3$ defined over an algebraically closed field $K$ of characteristic $p \ge 0$. 
Assume that $p=0$ or $d$ is prime to $p$. 
Then $\cC$ possesses two outer Galois points $P_1$ and $P_2 $ such that the set $G_{P_1}G_{P_2}$ is the semidirect product $G_{P_1} \rtimes G_{P_2}$, if and only if $\cC$ is projectively equivalent to the curve defined by one of the following:  
\begin{itemize}
\item[(a)] $x^d+y^d+1=0$; 
\item[(b)] $x^d+x^{d/2}(x^{d/2}+1)=0$, where $d$ is even; 
\item[(c)] $y^d+x^2(x^2+1)g(x^2)^2=0$, where $d$ and $d/2$ are even, and $g=\sum a_i x^i \in K[x]$ is a separable polynomial of degree $d/4-1$ determined by the relations 
$$ a_{d/4-1}=1, \ \mbox{ and } \ \frac{d^2}{2}a_i=8(i+1)^2a_i+4(i+1)(2i+3)a_{i+1}; $$   
\item[(d)] $y^d+(x^2+1)g(x^2)^2=0$, where $d$ and $d/2-1$ are even, and $g=\sum a_i x^i \in K[x]$ is a separable polynomial of degree $(d/2-1)/2$ determined by the relations
$$ a_{(d-2)/4}=1, \ \mbox{ and } \ \left(\frac{d}{2}\right)^2a_i=(2i+1)^2a_i+(2i+1)(2i+2)a_{i+1}. $$
\end{itemize} 
For curves in Case (b), (c) or (d), the following hold: 
\begin{itemize}
\item[(1)] The genus is $(d-2)^2/4$; 
\item[(2)] In Case (b) (resp. Cases (c) and (d)), the number of singular points is one (resp. is $d/2-1$); 
\item[(3)] If $d$ is even and $d/2$ is even (resp. $d/2$ is odd), then the nonsingular models of curves in Cases (b) and (c) (resp. in Cases (b) and (d)) are isomorphic;  
\item[(4)] $G_{P_1}$ is a dihedral group and $G_{P_2}$ is a cyclic group. 
\end{itemize}
\end{theorem} 

In particular, if $d$ is odd, then the Fermat curve is a unique example. 
On the other hand, the following holds.  

\begin{corollary}
Let $\cC$ be a plane curve of degree $d \ge 3$. 
Assume that $p=0$ or $d$ is prime to $p$. 
Then there exist two outer Galois points $P_1$ and $P_2$ such that $G_{P_1}G_{P_2}=G_{P_1} \times G_{P_2}$, if and only if $\cC$ is projectively equivalent to the Fermat curve $X^d+Y^d+Z^d=0$. 
\end{corollary}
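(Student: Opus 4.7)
I would derive the Corollary directly from Theorem \ref{result}. The ``if'' direction is immediate, since case (a) of Theorem \ref{result} explicitly states that for the Fermat curve $X^d+Y^d+Z^d=0$ one has $G_{P_1}\rtimes G_{P_2}=G_{P_1}\times G_{P_2}\cong C_d\times C_d$. So all the content lies in the converse.

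For the converse, suppose $G_{P_1}G_{P_2}=G_{P_1}\times G_{P_2}$. Since any direct product is in particular a semidirect product, the hypothesis of Theorem \ref{result} is satisfied, and $\cC$ is projectively equivalent to one of the two curves in (a) or (b). Case (a) is exactly the Fermat curve, so it suffices to exclude case (b). In case (b), Theorem \ref{result} supplies $G_{P_1}\rtimes G_{P_2}\cong D_d\rtimes C_d$; I would show this semidirect product is not direct, equivalently that some $g_1\in G_{P_1}$ fails to commute with some $g_2\in G_{P_2}$, which would contradict the assumption $G_{P_1}G_{P_2}=G_{P_1}\times G_{P_2}$.

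To make the non-commutativity concrete, I would extract from the proof of Theorem \ref{result} the explicit linear generators of $G_{P_1}$ and $G_{P_2}$ acting on $\mathbb{P}^2$ and stabilizing $X^d+X^{d/2}Z^{d/2}+Y^d=0$. The dihedral factor $G_{P_1}$ contains an involution arising from the $X\leftrightarrow Z$ symmetry of the defining polynomial together with a cyclic subgroup of order $d/2$ acting by scalings, while $G_{P_2}\cong C_d$ acts by scaling $Y$. One then checks that conjugation by the involution permutes the diagonal action of $G_{P_2}$ nontrivially on the $(X,Z)$-plane, so $G_{P_1}$ and $G_{P_2}$ do not commute elementwise. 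This rules out case (b) and leaves only the Fermat curve, completing the proof.

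The main obstacle is the last computational step: it presupposes an explicit description of the two Galois groups in case (b), which is produced as part of the proof of Theorem \ref{result}. Once the generators are written down, verifying the non-trivial commutator is a brief matrix calculation; the real work has already been done in establishing the main theorem.
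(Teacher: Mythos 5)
Your overall strategy is the one the paper intends: the Corollary is stated without a separate proof because it is meant to follow at once from Theorem \ref{result}, which already records that case (a) gives $G_{P_1}\times G_{P_2}\cong C_d\times C_d$ and case (b) gives $G_{P_1}\rtimes G_{P_2}\cong D_d\rtimes C_d$; one only has to check that the product in case (b) is not direct. So reducing to the exclusion of case (b) and exhibiting a non-trivial commutator is the right plan.

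However, your concrete description of the generators in case (b) is wrong, and as stated your final ``brief matrix calculation'' cannot be carried out. The polynomial $X^d+X^{d/2}Z^{d/2}+Y^d$ is \emph{not} symmetric under $X\leftrightarrow Z$, and more importantly $G_{P_1}$ is not a group of linear transformations: as the paper's proof of the if-part records (following Takahashi), the dihedral group $G_{P_1}\cong D_d$ is generated by the linear scalings $(x,y)\mapsto(\eta x,y)$, $\eta^{d/2}=1$, together with the \emph{non-linear} involution $\beta:(x,y)\mapsto(y^2/x,y)$. (This is consistent with the paper's remark that only $G_{P_2}\hookrightarrow{\rm PGL}(3,K)$ is guaranteed, and with the introductory discussion that $\langle G_{P_1},G_{P_2}\rangle$ need not embed into ${\rm PGL}(3,K)$.) The repair is immediate: with $\gamma:(x,y)\mapsto(x,\zeta y)$, $\zeta$ a primitive $d$-th root of unity, generating $G_{P_2}$, one computes $\beta\circ\gamma:(x,y)\mapsto(\zeta^2y^2/x,\zeta y)$ while $\gamma\circ\beta:(x,y)\mapsto(y^2/x,\zeta y)$; since $d\ge 4$ is even, $\zeta^2\ne 1$ and the two differ, so $G_{P_1}$ and $G_{P_2}$ do not commute elementwise. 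Alternatively, you can avoid all computation: if $G_{P_1}G_{P_2}=G_{P_1}\times G_{P_2}$ then it also equals $G_{P_2}\rtimes G_{P_1}$, so Step 1 of the paper's proof applied with the roles of $P_1$ and $P_2$ exchanged forces $G_{P_1}$ to be cyclic of order $d$, whereas $D_d$ is non-cyclic for every even $d\ge 4$; this rules out case (b) directly.
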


Plane curves described in Case (c) or in Case (d) in Theorem \ref{result} are new examples of plane curves with two outer Galois points (see the Table in \cite{YF}).

\section{Proofs}

The system of homogeneous coordinates of $\mathbb{P}^2$ is denoted by $(X:Y:Z)$, and the system of affine coordinates with $Z \ne 0$ is denoted by $(x, y)$, where $x=X/Z$, $y=Y/Z$.
The nonsingular model of $\cC$ is denoted by $\hat{\cC}$ and its genus is denoted by $g$. 
For two different points $R_1$ and $R_2 \in \mathbb{P}^2$, the line passing through $R_1$ and $R_2$ is denoted by $\overline{R_1R_2}$. 
When $G_1, G_2$ are subgroups of the full automorphism group ${\rm Aut}(\hat{\cC})$, we define a subgroup 
$$
C_{G_1}(G_2):=\{\sigma \in G_1 \ | \ \sigma \tau =\tau \sigma, \ \mbox{ for all } \tau \in G_2\}
$$
of $G_1$. 

First, we confirm (some part of) the converse assertion of Theorem \ref{result}.  
The following proposition regarding the Fermat curve is known, by Miura and Ohbuchi \cite{MO} (see also \cite{AS}).

\begin{proposition} \label{Fermat} 
For the Fermat curve $X^d+Y^d+Z^d=0$, points $P_1=(1:0:0)$ and $P_2=(0:1:0)$ are outer Galois points such that $G_{P_1} \cong C_d$, $G_{P_2} \cong C_d$, and $G_{P_1}G_{P_2}=G_{P_1} \rtimes G_{P_2}=G_{P_1} \times G_{P_2}$, where $C_d$ is the cyclic group of order $d$. 
\end{proposition}

Another example of a plane curve with two Galois points is the following, which was studied by Takahashi \cite{Taka}.   

\begin{proposition} \label{Takahashi} 
Let $\cC$ be the curve $X^d+X^{d/2}Z^{d/2}+Y^d=0$, and let $P_1=(1:0:0)$ and $P_2=(0:1:0)$. 
Then points $P_1, P_2$ are outer Galois points such that $G_{P_1} \cong D_d$, $G_{P_2} \cong C_d$, and $G_{P_1}G_{P_2}=G_{P_1} \rtimes G_{P_2}$, where $D_d$ is the dihedral group of order $d$. 
\end{proposition}

\begin{proof} 
It is easily verified that $P_2$ is an outer Galois point with $G_{P_2} \cong C_d$, since the linear maps $(x, y) \mapsto (x, \lambda y)$ with $\lambda^d=1$ preserve the curve and they form a cyclic group of order $d$. 
According to a work of Takahashi \cite{Taka}, $P_1$ is an outer Galois point with $G_{P_1} \cong D_d$.  
In fact, the two rational maps $(x, y) \mapsto (\eta x, y)$, where $\eta$ is a primitive $d/2$-th root of unity, and $(x, y) \mapsto (y^2/x, y)$ act on $\cC$ and preserve the fibers of $\pi_{P_1}$.  
A direct computation shows that $\langle G_{P_1}, G_{P_2}\rangle =G_{P_1} \rtimes G_{P_2}$.  
\end{proof} 

Takahashi's curve $X^d+X^{d/2}Z^{d/2}+Y^d=0$ admits another plane model with two outer Galois points. 
More precisely: 

\begin{proposition} \label{Another plane model} 
Let $d \ge 6$ be even, and let $\hat{\cC}$ be the nonsingular model of the curve $\cC: X^d+X^{d/2}Z^{d/2}+Y^d=0$. 
Then there exist a morphism $r': \hat{\cC} \rightarrow \mathbb{P}^2$ and points $P_1', P_2' \in \mathbb{P}^2 \setminus r'(\hat{\cC})$ such that the following hold: 
\begin{itemize} 
\item[(a)] The morphism $r': \hat{\cC} \rightarrow\cC':=r'(\hat{\cC})$ is birational, and $\deg \cC'=d$.  
\item[(b)] Points $P_1', P_2'$ are outer Galois points. 
\item[(c)] $G_{P_1'} \cong D_d$, $G_{P_2'} \cong C_d$, and $G_{P_1'}G_{P_2'}=G_{P_1'} \rtimes G_{P_2'}$. 
\item[(d)] $|C_{G_{P_1'}}(G_{P_2'})|=2$. 
\end{itemize} 
\end{proposition} 

\begin{proof} 
Let $r: \hat{\cC} \rightarrow \cC$ be the normalization, $P_1=(1:0:0) \in \mathbb{P}^2$ for the plane model $\cC$, and let 
$$\sigma(x, y)=(\eta x, y), \ \tau(x, y)=\left(\frac{y^2}{x}, y\right), $$
where $\eta$ is a primitive $d/2$-th root of unity.  
As we saw the previous Proposition, $P_1$ is an outer Galois point with $G_{P_1}=\langle \sigma, \tau \rangle \cong D_d$. 
We take a point $Q_{\mu}=(\mu:1:0)$ such that $\mu \in K$ and $\mu^d=-1$.  
Let   
$$ \sigma'(x, y)=(\eta x, \eta y), \ \tau'(x, y)=\left(\frac{y^2}{x}, \frac{y}{\mu^2}\right),  $$
and let $G_2'=\langle \sigma', \tau' \rangle$. 
Since $\sigma'\tau'=\tau'\sigma'$, it follows that $|G_2'|=d$. 
Note that the group $G_2'$ fixes the point $Q_{\mu}$. 
This implies that $G_2' \cong C_d$ (see, for example, \cite[Lemma 11.44]{HKT}). 
It can be confirmed that $\sigma' \sigma=\sigma \sigma'$, $\sigma'\tau=\tau\sigma'$, and $\tau'\sigma=\sigma^{-1}\tau'$. 
Since 
$$ \tau'\tau(x,y)=\left(x, \frac{y}{\mu^2}\right), \ \tau\tau'=\left(\frac{x}{\mu^4}, \frac{y}{\mu^2}\right), $$
it follows that $\tau'\tau=\sigma^m\tau\tau'$, where $\eta^m=\mu^4$. 
These imply that $G_{P_1}G_2'=G_{P_1} \rtimes G_2'$. 

We prove that $|C_{G_{P_1}}(G_{P_2'})|=2$.   
Assume that $\sigma^n \in C_{G_{P_1}}(G_2') \setminus \{1\}$. 
Since 
$$ \sigma^n\tau'=\left(\frac{\eta^n y^2}{x}, \frac{y}{\mu^2}\right), \ \tau'\sigma^n=\left(\frac{y^2}{\eta^n x}, \frac{y}{\mu^2}\right), $$
it follows that $\eta^{2n}=1$. 
This implies that $d/2$ is even and $n=d/4$. 
Assume that $\sigma^n\tau \in C_{G_{P_1}}(G_2')$. 
Since 
$$ \sigma^n\tau\tau'=\left(\frac{\eta^n x}{\mu^4}, \frac{y}{\mu^2}\right), \ \tau'\sigma^n\tau=\left(\frac{x}{\eta^n}, \frac{y}{\mu^2}\right), $$
it follows that $\eta^{2n}=\mu^4$. 
If $d/2$ is even, then $1=(\eta^{2n})^{d/4}=(\mu^4)^{d/4}=-1$. 
This implies that $p=2$ and $d$ is divisible by $p$. 
This is a contradiction. 
Therefore, $d/2$ is odd. 
Then $\eta^n=-\mu^2$. 
Whether $d/2$ is even or odd, it follows that $|C_{G_{P_1}}(G_2')|=2$. 

Since $G_2'$ fixes $Q_{\mu}$ and any element of $G_{P_1}$ does not fix $Q_{\mu}$, it follows that $G_{P_1} \cap G_2'=\{1\}$. 
Since $k(\cC)^{\langle \sigma' \rangle}=k(y/x) \supset k(\cC)^{G_2'}$, it follows from L\"{u}roth's theorem that $k(\cC)^{G_2'}$ is rational.   
Let $Q \in r^{-1}(\cC \cap \{Y=0\})$.  
It can be confirmed that 
$$ \sum_{\sigma \in G_{P_1}}\sigma(Q)=\sum_{Q' \in r^{-1}(\cC \cap \{Y=0\})} Q'=\sum_{\eta \in G_2'} \eta(Q).  $$
It follows from a theorem \cite[Theorem 1 and Remark 1]{SF2018} of the first author that there exist a birational morphism $r': \hat{\cC} \rightarrow \mathbb{P}^2$ of degree $d$ onto its image and points $P_1', P_2' \in \mathbb{P}^2 \setminus r'(\hat{\cC})$ such that $G_{P_1'}=G_{P_1}$ and $G_{P_2'}=G_2'$.  
Conditions (c) and (d) are satisfied for the groups $G_{P_1'}$ and $G_{P_2'}$, according to the discussion above.  
\end{proof}

\begin{proof}[Proof of Theorem \ref{result}] 
By Propositions \ref{Fermat} and \ref{Takahashi}, the converse assertions in Cases (a) and (b) are obvious. 
Proposition \ref{Another plane model} and the former assertion in Theorem \ref{result} imply the converse assertions in Cases (c) and (d), since we show later that the projective equivalence class is uniquely determined by the degree $d$ and the order $|C_{G_{P_1}}(G_{P_2})|$. 
We prove the former assertion in eight steps. 

{\it Step 1: $G_{P_2}$ is a cyclic group.} 
The projection $\pi_{P_1}$ from $P_1$ induces the covering $\hat{\cC} \rightarrow \mathbb{P}^1  \cong \hat{\cC}/G_{P_1}$.  
The covering  $\pi_{P_1}(\cC) \rightarrow \hat{\cC}/G_{P_1}G_{P_2}$ is denoted by $f_{P_1}$.  
By \cite[Theorem 1 (c')]{SF2018}, there exists a point $Q \in \hat{\cC}$ such that 
\begin{equation}\label{outer}
\sum_{\sigma \in G_{P_1}}\sigma(Q) = \sum_{\tau \in G_{P_2}}\tau(Q)
\end{equation}
as divisors. 
By this equation, for any $[\tau] \in G_{P_1}G_{P_2}/G_{P_1}$, $[\tau][Q]=[\tau(Q)]=[Q]$, where $[Q]=\pi_{P_1}(Q) \in \hat{\cC}/G_{P_1}$ is a class containing $Q$.  
It follows from \cite[Theorem 3.8.2]{Stich} that $f_{P_1}$ is totally ramified at $\pi_{P_1}(Q)$. 
Since $f_{P_1}$ is a Galois covering and $\pi_{P_1}(\cC) \cong \mathbb{P}^1$, $(G_{P_1}\rtimes G_{P_2})/G_{P_1} \cong G_{P_2}$ is a cyclic group of order $d$ (see, for example, \cite[Theorem 1]{VM}). 

{\it Step 2: The defining equation.} 
Let $R \in \pi_{P_1}(\cC) \cong \mathbb{P}^1$ be another ramification point and let $\ell \subset \mathbb{P}^2$ be the line corresponding to $R$. 
For a suitable system of coordinates, we can assume that $P_1=(1:0:0)$, $P_2=(0:1:0)$ and $\ell$ is given by $Y=0$. 
Note that $\pi_{P_1}$ and $\pi_{P_2}$ are represented by $(X:Y:Z) \mapsto (Y:Z)$ and $(X:Y:Z) \mapsto (X:Z)$ respectively. 
Since $f_{P_1}$ is totally ramified at $\pi_{P_1}(Q)$ and $R$, which correspond to the points $(1:0)$ and $(0:1)$, $f_{P_1}=(aY^d:bZ^d)$ for some $a, b \in K \setminus \{0\}$.  
Since the function field of $\hat{\cC}/(G_{P_1} \rtimes G_{P_2})$ is contained in $K(\hat{\cC}/G_{P_2})=K(x)$,  $y^d=c(x)/d(x)$ for some relatively prime polynomials $c(x)$ and $d(x) \in K[x]$. 
Since the curve $\cC$ is of degree $d$, $d(x) \in K$ and the equation $y^d-c(x)=0$ is the defining one. 
The degree of $c(x)$ is $d$, since $P_1 \in \mathbb{P}^2 \setminus \mathcal{C}$. 
The group $G_{P_2}$ consists of all linear maps $(x, y) \mapsto (x, \lambda y)$ with $\lambda^d=1$. 

{\it Step 3: The genus $g$.}
Note that $\pi_{P_1}$ is not ramified at any point in $\cC \cap \overline{P_1P_2}$, since $\pi_{P_2}$ is not ramified at these points. 
By the Riemann--Hurwitz formula, there exists a ramification point $S \in \cC-(\overline{P_1P_2} \cup \ell)$ for the projection $\pi_{P_1}$. 
Let $m$ be the ramification index of $S$. 
By a property of Galois coverings \cite[Corollary 3.7.2]{Stich}, there exist $(d/m)$ points with ramification index $m$ in the line $\overline{P_1S}$.   
Note that any element of $G_{P_2}$ is the restriction of some linear transformation. 
By the action of $G_{P_2}$, there exist $d \times (d/m)$ points with index $m$.  
By the Riemann--Hurwitz formula, we get
$$ 2g-2 \ge d(-2)+d\times \left(d-\frac{d}{m}\right). $$
If there exists a ramification point not contained in $\{ \eta (S) \ | \ \eta \in G_{P_1}G_{P_2}\} \cup \ell$, then $g \ge (d^2-2d+2)/2$, but  
this is a contradiction to $g \le (d-1)(d-2)/2$. 
Therefore, $\{ \eta(S) \ | \ \eta \in G_{P_1}G_{P_2}\}$ is the set of all ramification points in $\cC\setminus\ell$.  

Let $n$ be the number of irreducible components of $c(x)$ and let
$$ -c(x)=\gamma\prod_{i=1}^n(x-a_i)^{e_i}, $$
where $\gamma \in K$, $a_1, \ldots, a_n \in K$ are pairwise distinct and $\sum_{i=1}^ne_i=d$. 
By \cite[Proposition 3.7.3]{Stich}, 
$$ 2g-2+2d=\sum_{i=1}^n(d-r_i),   $$
where $r_i$ is the greatest common divisor of $d$ and $e_i$. 
Note that $d-r_i \le d-1$. 
By the lower bound of the genus $g$ above, $n \ge d-d/m+1$. 
In particular, the set $\cC \cap \ell$ consists of $n>d/2$ points and hence, $\pi_{P_1}$ is not ramified at each point of $\cC \cap \ell$, by \cite[Corollary 3.7.2]{Stich}. 
It follows that 
$$ 2g-2+2d=d\left(d-\frac{d}{m}\right). $$

{\it Step 4: The number of points on the line $\ell$ and their multiplicities.}
If $x-a_i$ is a component of $c(x)$, then the line defined by $x-a_i=0$ intersects $\mathcal{C}$ at only the point $(a_i, 0)$. 
The fiber of $(a_i, 0)$ under the normalization consists of exactly $e_i$ points, since $\pi_{P_1}$ is unramified at such points.  
Since $\pi_{P_2}$ is a Galois covering and $x-a_i$ gives a fiber of it, it follows that $e_i$ divides $d$.
This implies that $r_i=e_i$.  
Furthermore, by the Riemann--Hurwitz formula, 
$$ 2g-2+2d=\sum_{i=1}^n(d-r_i)=nd-\sum_i e_i=nd-d=d(n-1). $$
Since $2g-2+2d=d(d-d/m)$ as considered above, it follows that
$$ n=d-d/m+1. $$

Let $y=\beta$ denote a branch point of $\pi_{P_1}$. 
Then there exists a separable polynomial $h(x)$ of degree $d/m$ such that 
$$ -c(x)+\beta^d=h(x)^m. $$
Then 
$$ -c(x)=\prod_{k=0}^{m-1}(h(x)-\zeta^k\beta^{\frac{d}{m}}), $$
where $\zeta$ is a primitive $m$-th root of unity. 
If $x-a$ is a multiple component of $c(x)$, then $h(a)-\zeta^k \beta^\frac{d}{m}=0$ for some $k$, and $h'(a)=0$. 
This implies that $e_i \le d/m$. 

{\it Step 5: The number of nonsingular points on $\ell$.}
Recall that
$$ C_{G_{P_1}}(G_{P_2})=\{\sigma \in G_{P_1} \ | \ \sigma \tau =\tau \sigma, \ \mbox{ for all } \tau \in G_{P_2}\}. $$
We prove that for a nonsingular point $Q \in \mathcal{C} \cap \ell$,
$$ (C_{G_{P_1}}(G_{P_2}))Q=(\cC\setminus {\rm Sing}(\cC)) \cap \ell. $$
Assume by contradiction that there exists $\sigma \in C_{G_{P_1}}(G_{P_2})$ such that $\sigma(Q)$ is a singular point. 
Let $\tau \in G_{P_2}$ be an automorphism such that $\tau^{-1}(\sigma(Q)) \ne \sigma(Q)$ (see \cite[Theorem 3.7.1]{Stich}).  
Since $\tau(Q)=Q$, $\tau^{-1}\sigma\tau(Q)=\tau^{-1}(\sigma(Q)) \ne \sigma(Q)$. 
On the other hand, since $\sigma \in C_{G_{P_1}}(G_{P_2})$, it follows that $\tau^{-1}\sigma\tau(Q)=\sigma(Q)$. 
This is a contradiction. 

Let $Q'$ be a nonsingular point on $\ell$. 
It follows from \cite[Theorem 3.7.1]{Stich} that there exists $\sigma \in G_{P_1}$ such that $\sigma(Q)=Q'$. 
Let $\tau \in G_{P_2}$. 
Since $Q$ and $Q'$ are nonsingular points, $\tau(Q)=Q$ and $\tau(Q')=Q'$. 
Then $\tau^{-1}\sigma\tau(Q)=\tau^{-1}\sigma(Q)=\tau^{-1}(Q')=Q'=\sigma(Q)$. 
Since the stabilizer subgroup of $G_{P_1}$ of any point on the line $\ell$ is trivial (see \cite[Theorem 3.8.2]{Stich}), it follows that $\tau^{-1}\sigma\tau=\sigma$. 

{\it Step 6: $m \ge 3$ implies that $\cC$ is the Fermat curve.} 
Assume that $m \ge 4$. 
For the line $\ell$, there exist at least 
$$ n-\deg h'(x) \ge d-d/m+1-(d/m-1)=d-(2d/m)+2$$
nonsingular points. 
Since $m \ge 4$, it follows that 
$$ d-(2d/m)+2 \ge d/2+2. $$
It follows from Step 5 that all points on $\ell$ are nonsingular and $G_{P_1}G_{P_2}=G_{P_1}\times G_{P_2}$. 
By Steps 2 and 3, it follows that $m=d$. 
It follows from Step 4 that $\cC$ is projectively equivalent to the Fermat curve. 

Assume that $m=3$. 
Then there exist at least $d/3+2$ nonsingular points on the line $\ell$. 
By the discussion above for $m \ge 4$, $|C_{G_{P_1}}(G_{P_2})|=d/2$ or $d$. 
We prove that any $\sigma \in C_{G_{P_1}}(G_{P_2})$ is the restriction of some linear transformation of $\mathbb{P}^2$. 
Let $\Lambda$ be a (base-point-free) linear system of dimension $2$ induced by the normalization $r: \hat{\cC} \rightarrow \cC \subset \mathbb{P}^2$. 
Then $r^*\overline{P_1P_2}, r^*\ell \in \Lambda$. 
Since $P_1$ is outer Galois, it follows that
$$ \sigma^*r^*\overline{P_1P_2}=r^*\overline{P_1P_2}, \  \sigma^*r^*\ell=r^*\ell. $$
Let $Q$ be a nonsingular point on $\ell$. 
By Step 5, $\sigma(Q)$ is also a nonsingular point on $\ell$. 
Let $T_Q$ and $T_{\sigma(Q)}$ be tangent lines at $Q$ and $\sigma(Q)$ respectively. 
Then 
$$ \sigma^*(r^*T_{\sigma(Q)})=\sigma^*(d\sigma(Q))=d(\sigma^*\sigma(Q))=dQ=r^*T_Q. $$
Since $\Lambda$ is the smallest linear system containing $r^*\overline{P_1P_2}$, $r^*\ell$ and $r^*T_{\sigma(Q)}$, it follows that $\sigma^*\Lambda=\Lambda$.

Assume that $|C_{G_{P_1}}(G_{P_2})|=d/2$. 
Since $C_{G_{P_1}}(G_{P_2})$ acts on all lines passing through $P_1$, $C_{G_{P_1}}(G_{P_2})$ fixes $P_1$ and acts on $\ell$ as a cyclic group of order $d/2$. 
Let $R$ be a singular point on $\ell$. 
If $R$ is not a fixed point of $C_{G_{P_1}}(G_{P_2})$, then there exist $d/2$ singular points on $\ell$. 
This is a contradiction. 
Therefore, on the line $\ell$, there exist exactly $d/2$ nonsingular points and one singular point. 
This is a contradiction, since there exist exactly 
$$ d-d/m+1=(2d)/3+1$$
points on $\ell$ for $m=3$. 
Therefore, $|C_{G_{P_1}}(G_{P_2})|=d$ and $\cC$ is projectively equivalent to the Fermat curve.

{\it Step 7: The case where $m=2$ and $C_{G_{P_1}}(G_{P_2})$ fixes a point of $\cC \cap \ell$.}
Let $f=|C_{G_{P_1}}(G_{P_2})| \ge 2$.
As in Step 6, any element of $C_{G_{P_1}}(G_{P_2})$ is the restriction of some linear transformation of $\mathbb{P}^2$. 
Note that there exists a line $\ell' \subset \mathbb{P}^2 \setminus \{P\}$ of which points are fixed by $C_{G_{P_1}}(G_{P_2})$ (see, for example, \cite{M, Yos2009}). 
Assume that $\ell' \cap \ell=\{R\} \subset \cC$. 
It follows that $k \times f+1=d/2+1$ for some $k$, by considering the number of points on $\ell$. 
This implies that $2k=d/f$. 

We can assume that for a suitable system of coordinates, $R=(0:0:1)$ and a generator of $C_{G_{P_1}}(G_{P_2})$ is represented by $(X:Y:Z) \mapsto (\zeta X: Y: Z)$, where $\zeta$ is a primitive $f$-th root of unity. 
It follows that the multiplicity of $R$ is greater than or equal to $f$. 
Then nonsingular points on $\ell$ are defined by $x^f-a^f=0$ for some $a \in K$. 
We can assume that $a^f=-1$. 
Since $d/2+1-(f+1)=d/2-f$ singular points exist except for $R$, and $-c(x)=h^2-\beta^d$ has at most $d-2f$ components which define singular points on $\ell$ except for $R$, it follows that the multiplicity of singular points except for $R$ is two and that of $R$ is equal to $f$. 
Therefore, $h(x)^2-\beta^d$ is of the form 
$$ x^f(x^f+1)\prod_{i=1}^{d/(2f)-1}(x^f-a_i)^2 $$
for some pairwise distinct elements $a_i \in K$. 
It follows that there exists a separable polynomial $g$ of degree $d/(2f)-1$ such that 
$$ g(x^f)=\prod_{i=1}^{d/(2f)-1}(x^f-a_i). $$

We prove that $f=2$ or $d/2$. 
Assume that $2<f<d/2$. 
For a ramification point $S$ of the projection $\pi_{P_1}$, the orbit of $S$ under $G_{P_1}G_{P_2}$ is denoted by $G \cdot S$, which coincides with the set of all ramification points for $\pi_{P_1}$.   
It follows from \cite[Remark 4.3.7]{Stich} that 
$$ (dy)=-2(y)_{\infty}+\sum_{R \in G \cdot S}R, $$
by considering the projection $\pi_{P_1}(x, y)=y$. 
Let $D$ be the divisor of degree $d$ coming form $\cC \cap \{Z=0\}$. 
Then 
$$ (y)_{\infty}=D, \ \mbox{ and } \sum_{R  \in G \cdot S}R \sim \frac{d}{2} D. $$
Therefore, the divisor 
$$ K_\cC:=\frac{d-4}{2} D$$
is a canonical divisor of $\hat{\cC}$. 
Note that $(x)_{\infty}=D$ and $(y)_{\infty}=D$. 
Since $2<f<d/2$ and $f$ divides $d/2$, it follows that 
$$(f-1)+\left(\frac{d}{f}-2\right) \le \frac{d-4}{2} $$
and that 
$$ x^{f-1}y^{d/f-2} \in \mathcal{L}(K_\cC). $$
For a point $Q \in \hat{\cC}$ with $r(Q)=(0:0:1)$, 
$${\rm ord}_{Q}(x^{f-1}y^{d/f-2})=(f-1)\frac{d}{f}+\left(\frac{d}{f}-2\right)=d-2. $$
Then the number $d-1=(d-2)+1$ is a gap number of pole orders at $Q$. 
On the other hand, for a point $Q_c:=(c:0:1)$ with $c^{f}=-1$, 
$$ \left(\frac{y}{x-c}\right)_{\infty}=(d-1) Q_c, $$
namely, $d-1$ is a non-gap number at $Q_c$. 
This implies that there does not exist an automorphism $\sigma$ of $\hat{\cC}$ such that $\sigma(Q_c)=Q$. 
This is a contradiction. 

If $f=d/2$, then we obtain the curve in (b). 
We can assume that $f=2$. 
Considering the differentials of both sides of $h(x)^2-\beta^d=x^2(x^2+1)g(x^2)^2$, it follows that 
$$ 2h \times h'=g(x^2)x\{(4 x^2+2)g(x^2)+4(x^{4}+x^2)g'(x^2)\}. $$
On the other hand, since any multiple component of $-c(x)=h(x)^2-\beta^d$ is a component of $h'$, 
$$ h'=(\deg h)g(x^2) \times x. $$  
By these two equations, it follows that 
$$ d h=(4 x^2+2)g(x^2)+(4 x^{4}+4 x^2)g'(x^2). $$
By considering the differentials, 
$$ d h'=x \{8g(x^2)+(24x^2+12)g'(x^2)+(8 x^{4}+8 x^2)g''(x^2)\}.  $$
Since $h'=(d/2)g(x^2) \times x$, it follows that
$$ \frac{d^2}{2}g(x^2)=8g(x^2)+(24x^2+12)g'(x^2)+(8 x^{4}+8 x^2)g''(x^2). $$
Let $g(x)=\sum_{i=0}^{d/4-1}a_i x^i$ with $a_{d/4-1}=1$. 
We have relations 
$$ (d^2/2)a_i = 8(i+1)^2a_i+4(i+1)(2i+3)a_{i+1}$$
for $1 \le i \le d/4-1$, and  
$$(d^2/2)a_0=8a_0+12a_1.$$
The curve $\cC$ is in Case (c). 

{\it Step 8: The case where $m=2$ and $C_{G_{P_1}}(G_{P_2})$ does not fix any point of $\cC \cap \ell$.}
In this case, there exists a point of $\cC \cap (\mathbb{P}^2 \setminus \ell)$ fixed by $C_{G_{P_1}}(G_{P_2})$, which is nonsingular.  
Since the ramification indices of $\pi_P$ are two, it follows that $|C_{G_{P_1}}(G_{P_2})|=2$.
Furthermore, the number of nonsingular points on $\ell$ is two, and the number of singular points is $d/2+1-2=d/2-1$. 
It follows from $\sum_i e_i=d$ that the multiplicity of all singular points is two. 
Since $C_{G_{P_1}}(G_{P_2})$ acts on singular points and does not fix any point of $\cC \cap \ell$, it follows that $d/2$ is odd. 
Note that $h(x)$ is of the form $\sum_{j: \mbox{ odd }} b_j x^{j}$, since each component of $h(x)$ represents a line passing through $d$ ramification points of $\pi_{P_1}$. 
Since any irreducible component of $-c(x)=h^2-\beta^d$ with multiplicity two is a component of $h'$, it follows that   
$$ (d/2)^2(h^2-\beta^d)=(h')^2(x^2+1)$$
for a suitable system of coordinates. 
Considering the differentials of both sides, it follows that 
$$ (d/2)^2h=h''(x^2+1)+x h'.$$
Using these two equations, we have relations 
$$ b_{d/2}=1, \ (d/2)^2b_j=j^2b_j+(j+1)(j+2)b_{j+2}, \ \mbox{ and } \ \frac{b_1^2}{(d/2)^2}=-\beta^d. $$
Let $g(x)=\sum a_i x^i$ be a polynomial such that $(d/2)g(x^2)=h'(x)$. 
Then 
$$ (d/2)a_i=(2i+1)b_{2i+1} $$
for $i=0, \ldots, (d-2)/4$. 
It follows that 
$$ a_{(d-2)/4}, \ \mbox{ and } \ \left(\frac{d}{2}\right)^2a_i=(2i+1)^2a_i+(2i+1)(2i+2)a_{i+1}. $$
Since the defining equation is of the form 
$$ y^d+h^2-\beta^d=y^d+\left(\frac{h'}{(d/2)}\right)^2(x^2+1)=y^d+(x^2+1)g(x^2)^2=0, $$
the curve $\cC$ is in Case (d). 
\end{proof}

\begin{example} 
Let $d=6$. 
We consider Case (d) in Theorem \ref{result}.  
The curve $\cC$ is defined by 
$$ y^6+(x^2+1)(x^2+a_0)^2=0, $$
where $(6/2)^2 a_0=(0+1)^2 a_0+(0+1)(0+2) \times 1$. 
Then it follows that $a_0=1/4$. 

Let $d=8$. 
We consider Case (c) in Theorem \ref{result}. 
The curve $\cC$ is defined by 
$$ y^8+x^2(x^2+1)(x^2+a_0)^2=0, $$
where $(8^2/2)a_0=8(0+1)^2a_0+4(0+1)(0+3) \times 1$. 
Then it follows that $a_0=1/2$. 
\end{example} 

\begin{rem}
There exist inclusions 
$$ G_{P_2} \hookrightarrow {\rm PGL}(3, K) \ \mbox{ and } \ C_{G_{P_1}}(G_{P_2}) \hookrightarrow {\rm PGL}(3, K). $$
\end{rem}

\begin{rem}\label{takacurve}
It is easily seen that Takahashi's curve is isomorphic to a Generalized Fermat curve $\cF_{d/2,d}: z^{d/2}+y^d+1 = 0$. 
According to a result of Kontogeorgis \cite{Ko}, if $d \ge 6$ and $d-1$ is not a power of $p$, then $|{\rm Aut}(\hat{\cC})|=(d/2)\times (2d)=d^2$. 
This implies that $G_{P_1}G_{P_2}={\rm Aut}(\hat{\cC})$.  
\end{rem} 

For Fermat curves, the number of outer Galois points is known (\cite{SF, H, Yos}). 
For Takahashi's curve, the following holds. 

\begin{proposition} \label{The number of GP}
Let $d \ge 4$ be an even integer prime to $p$ and let $\cC$ be the curve $X^d+X^{d/2}Z^{d/2}+Y^d=0$. 
Then the number of outer Galois points is at most two.   
\end{proposition}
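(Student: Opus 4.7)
I would argue by contradiction: suppose a third outer Galois point $P_3\notin\{P_1,P_2\}$ exists. The first task is to locate $G_{P_3}$ inside $\mathrm{Aut}(\hat{\cC})$. Under the hypotheses $d\ge 6$ and $d-1$ not a power of $p$, Remark 2.5 (invoking Kontogeorgis) gives $\mathrm{Aut}(\hat{\cC})=G_{P_1}G_{P_2}\cong D_d\rtimes C_d$ of order $d^2$. Hence $G_{P_3}$ is an order-$d$ subgroup of this group, and the trivial-intersection fact $G_{P_1}\cap G_{P_3}=\{1\}$ from \cite[Lemma 7]{SF2011} forces $G_{P_3}$ to be a complement of the normal factor $G_{P_1}\cong D_d$; since the quotient is $C_d$, it follows that $G_{P_3}$ is cyclic of order $d$.

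The next step would be to classify the order-$d$ complements of $D_d$ in $D_d\rtimes C_d$ directly (or via $H^1(C_d,D_d)$) and to check, for each, whether it can be realized as the Galois group of an outer point. Appealing to the principle that a birational self-map of $(\cC,\mathbb{P}^2)$ carries outer Galois points to outer Galois points and conjugates their Galois groups accordingly, a complement of the form $\alpha G_{P_2}\alpha^{-1}$ with $\alpha\in G_{P_1}$ can only correspond to $P_3=\alpha(P_2)$. Writing $\alpha$ in terms of the generators $\sigma:(x,y)\mapsto(\eta x,y)$ and $\tau:(x,y)\mapsto(y^2/x,y)$ of $G_{P_1}$ and tracking the orbit of $P_2$, one verifies that $\alpha(P_2)$ is either $P_2$ itself or a point lying on $\cC$, both of which are excluded by the hypotheses on $P_3$.

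The regimes excluded by Kontogeorgis's bound---namely $d=4$ and $d-1=p^h$---must be handled by a separate direct computation. For $d=4$ the curve is an explicit quartic whose automorphism group is small enough to analyze by hand; for $d-1=p^h$ a parallel semidirect-product/orbit argument applies, still based on the explicit defining equation and the known pair $(G_{P_1},G_{P_2})$.

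The main obstacle is the geometric transfer from the conjugation $G_{P_3}=\alpha G_{P_2}\alpha^{-1}$ to the identification $P_3=\alpha(P_2)$: the element $\tau$ is not a linear transformation but a degree-two Cremona map fixing the singular point $(0:0:1)$ of $\cC$, so to give precise meaning to $\alpha(P_2)$ one must pass to the normalization $\hat{\cC}$, or blow up $(0:0:1)$, and carefully track the induced action of $\tau$ on the pencil of lines through $P_2$.
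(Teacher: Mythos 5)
Your plan hinges on two steps that are not justified, and the second of them is precisely the hard part of the problem. First, the reduction to ${\rm Aut}(\hat{\cC})\cong D_d\rtimes C_d$ of order $d^2$ is only available when $d\ge 6$ and $d-1$ is not a power of $p$; the proposition must also cover $d-1=p^h$, and in that case $\cF_{d/2,d}$ is a quotient of a Hermitian-type curve and its automorphism group can be far larger than $d^2$, so there is no ``parallel semidirect-product/orbit argument'' --- the entire framework of classifying order-$d$ complements of $G_{P_1}$ collapses there. The paper deliberately avoids relying on $|{\rm Aut}(\hat{\cC})|=d^2$: it only borrows from Kontogeorgis the fact that any automorphism permutes the Weierstrass points, hence that $G_P$ acts on $\cC\cap\{Z=0\}$, which already forces a third Galois point $P$ onto the line $\overline{P_1P_2}$.

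Second, the inference from $G_{P_3}=\alpha G_{P_2}\alpha^{-1}$ with $\alpha\in G_{P_1}$ to $P_3=\alpha(P_2)$ is exactly the obstacle you name at the end, and it is not a technicality one can defer: since $\tau:(x,y)\mapsto(y^2/x,y)$ is a Cremona map, $\alpha$ does not act on $\mathbb{P}^2$ at all, and Galois points are not preserved under birational self-maps of the pair $(\cC,\mathbb{P}^2)$ --- only under linear ones. Moreover, complements of $G_{P_1}$ that are \emph{not} conjugate to $G_{P_2}$ are left entirely untreated (``check whether it can be realized'' is the whole problem restated). The paper resolves the linearity issue head-on: it produces a nontrivial $\sigma\in G_P$ stabilizing the fibre $r^{-1}(Q)$ over the singular point $Q=(0:0:1)$ (via the normality of $G_Q$ and an orbit count on $\cC\cap\overline{PQ}$), then shows by a linear-system argument in the style of Yoshihara that this $\sigma$ extends to a linear transformation $\overline{\sigma}$ of $\mathbb{P}^2$, and finally uses the invariant lines $T_Q$, $\{Z=0\}$, $\{Y=0\}$ to force $P=P_1$. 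Without an argument of this kind (or some other mechanism converting subgroup data into the location of a point), your proposal does not close. The $d=4$ case in the paper is indeed a separate hands-on analysis using inflexion points and the automorphism groups of elliptic curves, so that part of your outline is consistent with the paper, though unspecified.
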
 

\begin{proof} 
Let $P_1=(1:0:0)$, $P_2=(0:1:0)$ and $Q=(0:0:1)$.  
The normalization is denoted by $r: \hat{\cC} \rightarrow \cC$. 
Since $P_1$ is Galois  and $\mathcal{C} \cap \overline{P_1Q}$ contains a nonsingular point, it follows from a property of Galois coverings \cite[Corollary 3.7.2]{Stich} that $r^{-1}(Q)$ consists of exactly $d/2$ points. 
Let $P \ne P_2$ be an outer Galois point and let $G_P$ the Galois group. 

Assume that $d \ge 6$. 
We show that $P \in \{Z=0\}=\overline{P_1P_2}$. 
Considering information on Weierstrass points, $G_{P}$ acts on the set $\cC \cap \{Z=0\} = \{Q_1,\ldots,Q_d\}$. (see \cite[p.129]{Ko}). 
If $G_P$ fixes all points on $\cC \cap \{Z=0\}$, then such points are ramification points for the projection $\pi_P$, and hence, the lines $\ell_i = \overline{PQ_i}$ are tangent lines to $\cC$ for every $ i = 1,\ldots,d$. 
In this case, $P$ coincides with the singular point $Q$. 
This is a contradiction. 
Therefore, there exist $\sigma \in G_P$ and a point $R \in \cC \cap \{Z=0\}$ such that $R \ne \sigma(R) \in \{Z=0\}$.
Then $P \in \overline{R\sigma(R)}=\{Z=0\}$.

Next, we show that there exists $\sigma \in G_{P} \setminus \{1\}$ which acts on the set $r^{-1}(Q)$. 
The line given by $\overline{QP_2}$ is denoted by $T_Q$. 
Note that $\cC \cap T_Q=\{Q\}$. 
Let $G_Q \cong C_{d/2}$ be the Galois group of the cyclic extension $K(z,y)/K(y)$ with $1+z^{d/2}+y^d=0$, which is given by the projection $\pi_Q$ from $Q$. 
Note that $G_Q$ acts on $r^{-1}(Q)$ and, for any $\sigma \in G_P$, $\sigma^{-1}G_Q\sigma=G_Q$ (see \cite[p.130]{Ko}).  
Let $R \in (\cC \cap \overline{PQ}) \setminus \{Q\}$. 
(Since $\overline{PQ} \ne T_Q$, such a point exists.)
Then there exist $d/2$ elements $\sigma_i \in G_P$ such that $\{\sigma_i(R)\} = r^{-1}(Q)$. 
Assume that there exist $i, j$ such that $\sigma_i\sigma_j(R) \in r^{-1}(Q)$. 
Then there exists $\tau \in G_Q$ such that $\tau(\sigma_i\sigma_j(R))=\sigma_i(R)$. 
It follows that $\sigma_i^{-1}\tau\sigma_i(\sigma_j(R))=R \not\in r^{-1}(Q)$. 
This is a contradiction to $\sigma_i^{-1} G_Q\sigma_i=G_Q$. 
Therefore, for each $i$, $\sigma_i(r^{-1}(Q))=r^{-1}(\cC \cap \overline{PQ}\setminus \{Q\})$ and $\sigma_i(r^{-1}(\cC \cap \overline{PQ}\setminus \{Q\}))=r^{-1}(Q)$. 
Since the number of elements of $r^{-1}(Q)$ is $d/2 \ge 3$, there exists a pair $(i,j)$ with $\sigma_i\sigma_j \ne 1$ acting on the set  $r^{-1}(Q)$. 

We show that $\sigma$ is the restriction of some linear transformation on $\mathbb{P}^2$. 
Let $\Lambda$ be a (base-point-free) linear system of dimension $2$ induced by the normalization $r: \hat{\cC} \rightarrow \cC \subset \mathbb{P}^2$. 
Let $\Lambda_0 \subset \Lambda$ be the sublinear system of dimension $1$ corresponding to the projection $\pi_P \circ r$. 
Since $P$ is an outer Galois point, $\sigma^*\Lambda_0=\Lambda_0$. 
Note that $r^*T_Q \in \Lambda \setminus \Lambda_0$. 
Since $\cC \cap T_Q=\{Q\}$ and $\sigma$ acts on $r^{-1}(Q)$, it follows that $\sigma^*(r^*(T_Q))=r^*(T_Q)$ as divisors.  
This implies that $\sigma^*\Lambda=\Lambda$. 
It follows that $\sigma$ is the restriction of some linear transformation $\overline{\sigma}$ on $\mathbb{P}^2$.

In this case, $\overline{\sigma}(T_Q)=T_Q$. 
Since $\overline{\sigma}(\{Z=0\})=\{Z=0\}$, $\overline{\sigma}(P_2)=P_2$. 
Therefore, $\overline{\sigma}$ acts on the ramification points for $\pi_{P_2}$ and hence, $\overline{\sigma}(\{Y=0\})=\{Y=0\}$. 
It follows that $P=P_1$. 
Assertion follows, in the case where $d \ge 6$. 

Finally, we consider the case where $d=4$.
Then $g=1$.  
It is not difficult to check that the points $(\lambda:0:1)$ and $(-\lambda:0:1)$, with $\lambda^2+1=0$,  are all total inflexion points of $\cC$. 
If $G_P$ is a cyclic group of order four (and $g=1$), then there exist two totally ramified points for $\pi_P$. 
This implies that $P=P_2$. 
We can assume that $G_P \cong (\mathbb{Z}/2\mathbb{Z})^{\oplus 2}$. 
If $P \in T_Q$, then there exist five outer Galois points (including $P_2$) on the line $T_Q$, since all elements $\sigma \in G_{P_2}$ are linear transformations and $\sigma(P) \ne P$ if $\sigma \ne 1$. 
In this case, five involutions fix a common point in the fiber $r^{-1}(Q)$. 
This is a contradiction to the structure of the automorphism group of elliptic curves (see \cite[Theorem 10.1 and Appendix A]{Sil}). 
Therefore, $P \not\in T_Q$. 
In this case, it follows trivially that there exists $\sigma \in G_{P} \setminus \{1\}$ which acts on the set $r^{-1}(Q)$. 
Similar to the fourth paragraph, $\sigma$ is the restriction of some linear transformation $\overline{\sigma}$ on $\mathbb{P}^2$. 
In this case, $\overline{\sigma}(Q)=Q$ and $\overline{\sigma}(T_Q)=T_Q$. 
By the former condition, $\overline{\sigma}(\{Z=0\})=\{Z=0\}$. 
Similar to the previous paragraph, it follows that $P=P_1$. 
\end{proof} 

\begin{rem}
For $d \geq 6$ in Proposition \ref{The number of GP}, the existence of a non-trivial linear subgroup in $G_P$ can also be proved by a group-theoretical argument as follows.  It can be seen that $G_P$ has an index $2$ subgroup $\Sigma$ that preserves $r^{-1}(Q)$ acting on it as a sharply transitive permutation group.  The linear subgroup $V$ of $G_{P_1}\rtimes G_{P_2}$ consists of all maps $(x,y)\rightarrow (ux,vy)$ with $u^{d/2}=1,v^d=1$. In particular $|V|=\frac{1}{2}d^2$, and $V$ preserves $r^{-1}(Q)$. The center $U$ of ${\rm Aut}(\hat{\cC})$ consists of all maps $\{\psi_u:(x,y)\rightarrow (ux,uy) \mid u^{d/2}=1\}$. 

Assume that $\Sigma \cap V=\{1\}$. 
Then the group $\Delta=\langle \Sigma, V\rangle$ has order at least $|\Sigma||V|=\frac{1}{4}d^3$. As $\frac{1}{4}d^3> d^2$, by Remark \ref{takacurve}, $d-1$ is a power of $p$. 
Furthermore, $\Delta$ preserves $r^{-1}(Q)$. 

Let $\Psi$ be the stabilizer of a point $R\in r^{-1}(Q)$ in $\Delta$.  As $U$ is centralized by $\Delta$ and $U$ is transitive on $r^{-1}(Q)$, any element in $\Delta$ which fixes $R$ must fix $r^{-1}(Q)$ pointwise. Therefore, if $\Psi$ is the subgroup of $\Delta$ fixing  $r^{-1}(Q)$ pointwise, then $|\Delta|=\frac{1}{2}d|\Psi|$. This implies 
$|\Psi|\ge \frac{1}{2}d^2$.  
Since $2g(\hat{\cC})-2= \frac{1}{2}d(d-4)$, a contradiction is obtained by applying the Riemann--Hurwitz formula to $\hat{\cC}\rightarrow \hat{\cC}/\Psi$, as  $\frac{1}{2}d(d-4)\ge -d^2+\frac{1}{2}d(\frac{1}{2}d^2-1)$ is satisfied for $ 3-\sqrt{3}\leq  d \leq 3+\sqrt{3}< 6$. 
\end{rem}

\begin{rem}
When $K=\mathbb{C}$ and $d=4$, Proposition \ref{The number of GP} was first proved by Kanazawa and Yoshihara \cite{KY}. 
\end{rem}

For another plane model of Takahashi's curve, we have the following: 

\begin{proposition} 
Let $d \ge 6$, and let $\cC' \subset \mathbb{P}^2$ be a plane curve in Case (c) or in Case (d) in Theorem \ref{result}. 
If $d-1$ is not a power of $p$, then the number of Galois points is at most two. 
\end{proposition} 

\begin{proof}
Let $P_2'=(0:1:0)$. 
Then $P_2'$ is an outer Galois point and $G_{P_2'} \cong C_d$. 
Let $P_1' \in \{Y=0\}$ be an outer Galois point described in Proposition \ref{Another plane model}.  
By the assumption on the degree, it follows from a theorem of Kontogeorgis \cite{Ko} that $|{\rm Aut}(\hat{\cC'})|=d^2$. 
Therefore, $G_{P_1'}G_{P_2'}={\rm Aut}(\hat{\cC'})$ and $G_{P_1'}$ is a normal subgroup of ${\rm Aut}(\hat{\cC'})$. 
Assume that $P \in \mathbb{P}^2$ be an outer Galois point different from $P_1'$. 
Since $G_{P} \subset {\rm Aut}(\hat{\cC'})$, it follows that $G_{P_1'}G_{P}=G_{P_1'} \rtimes G_{P}$. 
By Theorem \ref{result}, $G_{P} \cong C_d$ and all automorphisms in $G_{P}$ are linear. 
Since ${\rm Sing}(\cC')$ is invariant under the action of $G_{P}$, it follows that $G_{P}$ fixes the line $\{Y=0\}$. 
With singular points and nonsingular points on the line $\{Y=0\}$ considered, it follows that there exists an automorphism $\tau \in G_{P}$ fixing $(\cC' \setminus {\rm Sing}(\cC')) \cap \{Y=0\}$ pointwise. 
Let $Q_1, Q_2 \in (\cC' \setminus {\rm Sing}(\cC')) \cap \{Y=0\}$ be different points, which are ramification points of the projection from $P$. 
Then $P \in T_{Q_1} \cap T_{Q_2}$, where $T_{Q_1}$ and $T_{Q_2}$ are tangent lines. 
This implies that $P=P_2'$. 
\end{proof}

\begin{rem} 
Assume that $p>0$, $d=p^em$ for some $e>0$ and $m$ is not divisible by $p$. 
Then we can prove that if there exist two outer Galois points $P_1$ and $P_2$ such that $G_{P_1}G_{P_2}=G_{P_1} \times G_{P_2}$, then $m$ divides $p^e-1$ and $\cC$ is projectively equivalent to the curve defined by 
$$ \left(\sum_{m \ | \ p^i-1} a_i x^{p^i}\right)^m+c_1\left(\sum_{m \ | \ p^j-1} b_j y^{p^j}\right)^m+c_2=0, $$
where $a_e, a_0, b_e, b_0, c_1, c_2 \in K \setminus \{0\}$ and $a_i, b_j \in K$ for $i=1, \ldots, e-1$ and $j=1, \ldots, e-1$. 
This will be shown in another paper. 
\end{rem}

\section{Concluding Remarks}
It is natural to ask how general the situation studied in our paper is. We start by pointing out the following. 

\begin{rem}\label{finalrem}
When $p=0$, the Fermat curve, Takahashi's curve and another plane model of Takahashi's curve are only known examples of plane curves satisfying the following conditions (see the Table in \cite{YF}): 
\begin{itemize}
\item[(1)] $g >0$.
\item[(2)] There exist two outer Galois points. 
\end{itemize}
\end{rem}

Based on Remark \ref{finalrem}, we raise the following Question. 
\begin{question}\label{finalquestion}
If $p = 0$ and $g>0$, is it true that any curve with two Galois points is isomorphic to one of the curves described in Theorem \ref{result}? 
\end{question}
Answering to Question \ref{finalquestion} seems a rather challenging task, to be addressed in a forthcoming paper. A lot of information should be gathered by a careful analysis for the possible structures of $G = \langle G_{P_1},G_{P_2} \rangle$. We end this Section with some results on this direction. 

\begin{proposition}
There exists a non-faithful permutation representation $\varphi: G \rightarrow S_d$. Further, $\varphi(G)$ is a transitive subgroup of $S_d$. 
\end{proposition}
\begin{proof}
Let $\Omega =  \{\sigma(Q) \mid \sigma \in G_{P_1}\} = \{\tau(Q)\mid \tau \in G_{P_2}\}$ (as pointed out before, $\Omega$ is a $G$-short orbit, whence $G$ acts on $\Omega$).  By \cite[Lemma 3.4]{SFar}, $|\Omega| = d$. This immediately yields the existence of  the permutation representation $\varphi: G \rightarrow S_d$. We denote by $N$ its Kernel and by $n = |N|$. 

Assume that $\varphi $ is faithful. This is equivalent to $ n =1$. 
Then $G$ itself can be regarded as a transitive permutation group on $\Omega$. Recall that $|G| \geq d^2$ as it contains $G_{P_1}G_{P_2}$. Since $p = 0$, then the stabilizer of any point in $\Omega$ is cyclic. Then by \cite[Theorem]{L},  we have $|G| \leq d^2-d$, a contradiction.  

As both $G_{P_1}$ and $G_{P_2}$ act regularly on $\Omega$, we have that $G$ is transitive on $\Omega$, whence our second claim.
\end{proof}
\begin{rem}
As a consequence of the Riemann--Hurwitz formula applied to $\cC\rightarrow \cC/N$, one immediately gets that $ n \leq d$. If $n  = d$ then the curve $\cC$ is non-singular as it has genus equal to $(d-1)(d-2)/2$, and hence it is projectively equivalent to the Fermat curve (see \cite{Yos}). This happens because for the Fermat curve with $P_1=(1:0:0)$ and $P_2=(0:1:0)$, the group $G$ contains the automorphism represented by 
$$ 
\left(\begin{array}{ccc}
\zeta & & \\
& 1 & \\
& & 1
\end{array}\right) \times
\left(\begin{array}{ccc}
1 & & \\
& \zeta & \\
& & 1
\end{array}\right)
=\left(\begin{array}{ccc}
\zeta & & \\
& \zeta & \\
& & 1
\end{array}\right)
\sim
\left(\begin{array}{ccc}
1 & & \\
& 1 & \\
& & \zeta^{-1}
\end{array}\right), $$
where $\zeta$ is a primitive $d$-th root of unity. 
\end{rem}

\begin{rem}
If $n = d$, then the group $G$ is abelian as we have $G \simeq C_d\times C_d$. Interestingly, this is the only case when $G$ is abelian. By contradiction, let $ n < d$ and assume that  $G$ is abelian. Then $G/N$ is a transitive abelian permutation group. But a transitive abelian permutation group is necessarily regular, hence $|G/N| = d$. But then $|G| = nd < d^2$, a contradiction. 
\end{rem}

\subsection*{Acknowledgments} 
The authors had an opportunity to discuss this article (other than e-mail) during their visit at The University of Pavia, in September, 2017. 
The authors thank Professor Gian Pietro Pirola for his warm hospitality.


\begin{thebibliography}{999}
\bibitem{AS} N. Arakelian, P. Speziali, On generalizations of Fermat curves over finite fields and their automorphisms, {\em Comm. Algebra}, {\bf 45} (2017) no. 11, 4926-4938. 
\bibitem{SF} S. Fukasawa, On the number of Galois points for a plane curve in positive characteristic, \emph{Comm. Algebra}, {\bf 36} (2008), 29-36; Part II, \emph{Geom. Dedicata}, {\bf 127} (2007), 131-137. 
\bibitem{SF2011} S. Fukasawa, Classification of plane curves with infinitely many Galois points, \emph{J. Math. Soc. Japan}, {\bf 63} (2011), 195-209.
\bibitem{SF2018} S. Fukasawa, A birational embedding of an algebraic curve into a  projective plane with two Galois points, \emph{J. Algebra}, {\bf 511} (2018), 95-101. 
\bibitem{SFar} S. Fukasawa,  On the number of Galois points for a plane curve in characteristic zero, available at arXiv:1604.01907. 
\bibitem{HKT} J. W. P. Hirschfeld, G. Korchm\'{a}ros and F. Torres, Algebraic curves over a finite field, Princeton Univ. Press, Princeton, 2008.
\bibitem{H} M. Homma, Galois points for a Hermitian curve, \emph{Comm. Algebra}, {\bf 34} (2006), 4503-4511. 
\bibitem{KY} 
M. Kanazawa, H. Yoshihara, Galois lines for space elliptic curve with $j=12^3$, 
\emph{Beitr. Algebra Geom.}, {\bf 59} (2018), 431-444. 
\bibitem{Ko}
A. Kontogeorgis, The group of automorphisms of the function field of the curve $X^n+Y^m+1 = 0$, \emph{J. Number Theory}, {\bf 72} (1998), 110-136. 
\bibitem{L} A. Lucchini, On the order of transitive permutation groups with cyclic point stabilizer, \emph{Atti della Accademia Nazionale dei Lincei. Classe di Scienze Fisiche, Matematiche e Naturali. Rendiconti Lincei. Matematica e Applicazioni, Serie 9}, Vol {\bf 9}, (1998), n.4, p. 241-243. 
\bibitem{M}
K. Miura, Galois points for plane curves and Cremona transformations,
\emph{J. Algebra}, {\bf 320} (2008), 987-995. 
\bibitem{MO}
K. Miura, A. Ohbuchi, Automorphism group of plane curve computed by Galois points, 
\emph{Beitr. Algebra Geom.}, {\bf 56} (2015), 695-702. 
\bibitem{MY} K. Miura, H. Yoshihara, Field theory for function fields of plane quartic curves, \emph{J. Algebra}, {\bf 226} (2000), 283-294. 
\bibitem{Sil} J. H. Silverman, The arithmetic of elliptic curves, Graduate Texts in Mathematics {\bf 106}, Springer-Verlag, New York, 1986. 
\bibitem{Stich} H. Stichtenoth, Algebraic function fields and codes, Graduate Texts in Mathematics {\bf 254}, Springer-Verlag, Berlin Heidelberg, 2009. 
\bibitem{Taka} T. Takahashi, Galois point for a plane curve with two singular points, Talk in Symposium on Algebraic Geometry in Sado, Niigata, Japan, June 2011. 
\bibitem{VM} R. C. Valentini, M. L. Madan, A Hauptsatz of L. E. Dickson and Artin--Schreier extensions, \emph{J. Reine Angew. Math.}, {\bf 318} (1980), 156-177. 
\bibitem{Yos} H. Yoshihara, Function field theory of plane curves by dual curves, \emph{J. Algebra}, {\bf 239} (2001), 340-355.  
\bibitem{Yos2009} H. Yoshihara, Rational curve with Galois point and extendable Galois automorphism, \emph{J. Algebra}, {\bf 321} (2009), 1463-1472.
\bibitem{YF} H. Yoshihara, S. Fukasawa, List of problems, available at: \\ 
https://sites.google.com/sci.kj.yamagata-u.ac.jp/fukasawa-lab/open-questions-english
\end{thebibliography}
\end{document}